\numberwithin{equation}{section}
\newtheorem{theorem}{Theorem}[]
\newtheorem{example}[theorem]{Example}
\newtheorem{lemma}[theorem]{Lemma}
\newtheorem{corollary}[theorem]{Corollary}
\newcommand{\dx}{\,{\rm d}_F^\alpha x}
\begin{document}
\title[A Fractal Dirac Eigenvalue Problem]{A Fractal Dirac Eigenvalue Problem: \\ Spectral Properties and Numerical Examples} %%%%%%%%%
\author[F. A. \c Cetinkaya, G. Plott]
{F. Ay\c ca \c Cetinkaya, Gage Plott}  
\date{\today}
\address{F. Ay\c ca \c Cetinkaya and Gage Plott \newline 
 Department of Mathematics, University of Tennessee at Chattanooga, 
 Chattanooga, TN 37403, USA.}
\email{fatmaayca-cetinkaya@utc.edu; gage-plott@utc.edu}

\begin{abstract}
In this paper, we study a Dirac boundary value problem where the operator is considered with a derivative of order $\alpha \in (0,1]$, known as the $F^\alpha$-derivative. We prove some spectral properties of eigenvalues and eigenfunctions and we present numerical examples to demonstrate the practical implications of our approach.
\end{abstract} %%%%%%%%%

\keywords{Fractal calculus, Fractal derivative, Dirac eigenvalue problem, Eigenvalues, Eigenfunctions}

\maketitle

\section{Introduction}

The word \textit{fractal} derives from the Latin word \textit{fractus} which means \textit{cracked.}  Fractals exhibit unique geometric properties, showcasing a fractal dimension that surpasses their topological dimension. These intricate structures display self-similarity at varying scales, combining repetitive and random processes. The intricate nature of fractals poses challenges for conventional calculus methods in calculating derivatives and integrals. The connection between fractal geometry and natural phenomena, as seen in clouds, mountains, and lightning, underscores a complexity that defies conventional mathematical frameworks, as highlighted in \cite{Mandelbrot1982}. Moreover, in contrast to Euclidean geometry, determining the size of fractals involves non-trivial considerations for measurements like length, surface area, and volume, as discussed in \cite{Falconer2004, Jorgensen2006}.

Fractals, characterized by intricate patterns, showcase self-similarity and often display dimensions that are non-integer and complex \cite{Lapidus2017, Massopust2017}. The middle-third set of Cantor is one of the most well-known examples of fractals \cite{Edgar1998}. Fractals are often too irregular to have any smooth differentiable structure defined on them, and this results in delivering the methods and techniques of ordinary calculus as powerless and inapplicable. The techniques and methods to create calculus on the fractal sets and curves are studied in \cite{Falconer2004, Kigami2001, Strichartz2018}.

In recent work, Parvate and Gangal \cite{Parvate2009, Parvate2011} introduced $F^\alpha$-calculus, a form of Riemannian-like calculus grounded in the fractal subsets of the real line. This calculus is distinguished by its algorithmic simplicity compared to other methods. $F^\alpha$-calculus is a generalization of the ordinary calculus that addresses the cases where standard calculus is inapplicable. In this calculus, an integral of order $\alpha \in (0,1]$ called the $F^\alpha$-integral is defined which makes it possible to integrate functions with fractal support $F$ of dimension $\alpha$.

Furthermore, a derivative of order $\alpha \in (0,1]$, known as the $F^\alpha$-derivative, facilitates the differentiation of functions such as the Cantor staircase function and the Weierstrass function. In contrast to the classical fractional derivative, the $F^\alpha$-derivative aligns the geometrical order of the derivative with the domain of the function's support, lending it a distinct physical interpretation \cite{Golmankhaneh2021elec, Sandev2019, Uchaikin2013}. Notably, the $F^\alpha$-derivative is local, which is in contrast to the non-local nature of the classical fractional derivative. This locality is crucial in physics, where all measurements are inherently local. Moreover, $F^\alpha$-calculus retains much of the simplicity found in ordinary calculus \cite{Golmankhaneh2022}.

Differential equations over fractal domains are often referred to as fractal differential equations. Studies concerning fractal differential equations have been an important area of research in recent years. For instance, in \cite{Golmankhaneh2017lip} Golmankhaneh and Tun\c{c} prove the existence and uniqueness theorems for the linear and non-linear fractal differential equations and they give the fractal Lipschitz condition on the $F^\alpha$-calculus. In \cite{Golmankhaneh2019logistic} Golmankhaneh and Cattini give difference equations on fractal sets and their corresponding fractal differential equations. They define an analogue of the classical Euler method in fractal calculus and they solve fractal differential equations by using this fractal Euler method. In \cite{Golmankhaneh2019sumudu} Golmankhaneh and Tun\c{c} give the analogues of Laplace and Sumudu transforms, which have an important role in control engineering problems, and they solve linear differential equations on Cantor-like sets by utilizing the fractal Sumudu transforms. Fractal differential equations were solved by defining fractal Mellin, Laplace, and Fourier transforms in \cite{Golmankhaneh2023laplace}. Retarded, neutral, and renewal delay differential equations with constant coefficients in the fractal domain are solved through the method of steps and employing Laplace transform in \cite{Golmankhaneh2023delay}. 

In light of the above-given literature, we study the Dirac eigenvalue problem generated by the fractal differential equation 
\begin{equation} \label{1.1}
    \ell^\alpha f := \left\{ \begin{array}{ll}
D_F^\alpha f_2 - p(x) f_1 =\lambda f_1  \\
-D_F^\alpha f_1 + r(x) f_2 =\lambda f_2
\end{array} \right., \quad x \in [0,\pi]
\end{equation}
and the boundary conditions 
\begin{equation} \label{1.2}
    U_1 (f):=f_1 (0)=0,
\end{equation}
\begin{equation} \label{1.3}
    U_2 (f):= f_1 (\pi)=0,
\end{equation}
where $D_F^\alpha$ indicates the $F^\alpha$-derivative introduced in \cite{Parvate2009}, $f=\begin{pmatrix}
f_1\\
f_2
\end{pmatrix}$, $\lambda$ is a real spectral parameter, $p(x), r(x) \in \mathcal{L}_2^\alpha (0,\pi)$ are real valued functions where $\mathcal{L}_2^\alpha (0,\pi)$ is the space of square $F^\alpha$-integrable functions on $[0,\pi]$, i.e.
\begin{equation*}
\int^\pi_0 \lvert f(x)\rvert^2 \dx < \infty
\end{equation*}
holds for $f:[0,\pi] \rightarrow \mathbb{R}$ as $Sch(f)$ is an $\alpha$-perfect set. 

Although numerous studies address various differential equations problems within the fractal calculus framework, there are relatively few that focus on eigenvalue problems in the context of fractal calculus. For instance, in \cite{Cetinkaya2021} \c{C}etinkaya and Golmakhaneh explore a fractal Sturm--Liouville problem, while in \cite{Allahverdiev2024} Allahverdiev and Tuna prove the existence and uniqueness theorem for the solutions of such problems. In this work, we extend the results in \cite{Cetinkaya2021} to the Dirac setting and provide numerical examples to demonstrate the practical implications of our approach. We believe that this work will contribute to further studies related to the eigenvalue problems generated with $F^\alpha$-derivative and their applications.

The structure of the paper is as follows. In Section 2 we introduce a self-adjoint operator and we give some of the virtues
of eigenvalues and vector-valued eigenfunctions. In Section 3 we present numerical examples to illustrate the applicability of our conclusions. In Section 4 we close the paper with some concluding remarks.

\section{Spectral properties}
\label{Sec:3}

In the context of fractal analysis described in \cite{Golmankhaneh2022}, an inner product in the Hilbert space $\mathcal{L}^\alpha_2 (0,\pi)$ can be defined by 
\[
\langle f,g \rangle =\int^\pi_0 \big(f_1(x)g_1(x)+f_2 (x)g_2 (x)\big) \dx
\]
where $f=(f_1, f_2)^T \in \mathcal{L}^\alpha_2 (0,\pi)$ and $g=(g_1, g_2)^T \in \mathcal{L}^\alpha_2 (0,\pi)$.

\begin{theorem}
The operator $\ell^\alpha$ is self-adjoint in $\mathcal{L}^\alpha_2 (0,\pi)$.
\end{theorem}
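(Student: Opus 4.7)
The plan is to verify self-adjointness in the symmetric sense by showing that $\langle \ell^\alpha f, g\rangle = \langle f, \ell^\alpha g\rangle$ for all $f=(f_1,f_2)^T$ and $g=(g_1,g_2)^T$ in the domain of $\ell^\alpha$, i.e.\ for pairs whose first components vanish at $0$ and $\pi$. The key ingredient will be the $F^\alpha$-analogue of integration by parts, which I will assume from the Parvate--Gangal framework in the form
\[
\int_0^\pi (D_F^\alpha u)\, v\, \dx \;=\; \bigl[u(x)v(x)\bigr]_0^\pi \;-\; \int_0^\pi u\,(D_F^\alpha v)\, \dx,
\]
valid for $F^\alpha$-differentiable $u,v$ on the $\alpha$-perfect support.

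First I would write out both sides explicitly. Expanding $\langle \ell^\alpha f,g\rangle$ gives
\[
\int_0^\pi \bigl[(D_F^\alpha f_2)g_1 - p f_1 g_1 - (D_F^\alpha f_1)g_2 + r f_2 g_2\bigr] \dx,
\]
and similarly for $\langle f,\ell^\alpha g\rangle$. The potential terms $p f_1 g_1$ and $r f_2 g_2$ appear symmetrically because $p,r$ are real-valued, so they cancel upon taking the difference. Next I would apply fractal integration by parts to the two derivative terms, yielding
\[
\langle \ell^\alpha f,g\rangle - \langle f,\ell^\alpha g\rangle \;=\; \bigl[f_2(x)g_1(x) - f_1(x)g_2(x)\bigr]_0^\pi,
\]
which is the Dirac analogue of the Lagrange (Wronskian-like) bracket.

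Finally I would invoke the boundary conditions $U_1(f)=U_2(f)=0$ and $U_1(g)=U_2(g)=0$: since $f_1(0)=f_1(\pi)=0$ and $g_1(0)=g_1(\pi)=0$, both $f_2(\pi)g_1(\pi)-f_1(\pi)g_2(\pi)$ and $f_2(0)g_1(0)-f_1(0)g_2(0)$ vanish, so the bracket collapses to zero and symmetry follows.

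I expect the routine identities above to be straightforward; the main subtlety is justifying the fractal integration by parts formula and ensuring that the functions $f, g$ belong to a domain on which $D_F^\alpha$ acts in $\mathcal{L}_2^\alpha(0,\pi)$, with $p,r \in \mathcal{L}_2^\alpha(0,\pi)$. If a full self-adjointness (rather than symmetry) argument is intended, one would additionally need to characterize the adjoint domain and show that any $g$ for which $f\mapsto \langle \ell^\alpha f,g\rangle$ is bounded must itself satisfy $g_1(0)=g_1(\pi)=0$; this is done by choosing test functions $f$ with prescribed boundary values of $f_2$ at $0$ and $\pi$ to force each bracket term separately to vanish, mirroring the classical Dirac argument.
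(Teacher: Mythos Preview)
Your proposal is correct and follows essentially the same route as the paper: expand $\langle \ell^\alpha f,g\rangle - \langle f,\ell^\alpha g\rangle$, cancel the potential terms, reduce the derivative terms to the Lagrange bracket $[f_2 g_1 - f_1 g_2]_0^\pi$ (the paper does this via the product rule and the fractal fundamental theorem rather than stating integration by parts separately), and kill the bracket using $f_1(0)=f_1(\pi)=g_1(0)=g_1(\pi)=0$. Your closing remark distinguishing symmetry from full self-adjointness is more careful than the paper itself, which establishes only symmetry.
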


\begin{proof}
Let $f$ and $g$ be the solutions of the boundary value problem \eqref{1.1}--\eqref{1.3}. Using the definition of the inner product we have
\begin{align*} 
\left\langle \ell^\alpha f, g \right\rangle - \left\langle f , \ell^\alpha g \right\rangle
&= \int^\pi_0 \big(D_F^\alpha f_2 -p(x) f_1\big) g_1 \dx +  \int^\pi_0 \big(-D_F^\alpha f_1 + r(x) f_2\big) g_2 \dx\\ 
&\quad -  \int^\pi_0 f_1 \big(D_F^\alpha g_2 -p(x) g_1\big) \dx -  \int^\pi_0 f_2 \big(-D_F^\alpha g_1 + r(x) g_2\big) \dx\\
&= \int^\pi_0 \big( D_F^\alpha f_2 g_1 + f_2 D_F^\alpha g_1-D_F^\alpha f_1 g_2 - f_1 D_F^\alpha g_2\big)\dx \\
&= \int^\pi_0 D_F^\alpha \big( f_2 g_1 -f_1 g_2\big) \dx.
\end{align*}
Hence
\begin{equation} \label{2.1}
   \left\langle \ell^\alpha  f, g \right\rangle - \left\langle f , \ell^\alpha  g \right\rangle 
   =  \big\{\big(f_2 g_1 - f_1 g_2 \big) \chi_F (x)\big\}_{x=0}^{\pi}.
\end{equation}
Using the boundary conditions \eqref{1.2} and \eqref{1.3}, we see that this term vanishes, so 
\[
 \left\langle \ell^\alpha  f, g \right\rangle - \left\langle f , \ell^\alpha  g \right\rangle=0,
\]
completing the proof.
\end{proof}

Assume that the boundary value problem \eqref{1.1}--\eqref{1.3} has a nontrivial solution 
\[
f(x,\lambda_0)=\begin{pmatrix}
f_1 (x,\lambda_0) \\
f_2 (x,\lambda_0)
\end{pmatrix}
\]
for a certain $\lambda_0$, called an eigenvalue, and the corresponding solution $f(x,\lambda_0)$ is called a vector-valued eigenfunction. 

\begin{lemma}
The vector-valued eigenfunctions $f (x,\lambda_1)$ and $g(x,\lambda_2)$ corresponding to different eigenvalues $\lambda_1 \neq \lambda_2$ are orthogonal.
\end{lemma}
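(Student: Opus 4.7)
The plan is to derive the orthogonality directly from the self-adjointness of $\ell^\alpha$ established in the preceding theorem, so very little new work is required. By assumption $\ell^\alpha f(x,\lambda_1) = \lambda_1 f(x,\lambda_1)$ and $\ell^\alpha g(x,\lambda_2) = \lambda_2 g(x,\lambda_2)$, with both eigenfunctions satisfying the boundary conditions \eqref{1.2}--\eqref{1.3} and lying in $\mathcal{L}^\alpha_2(0,\pi)$.

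First I would form $\langle \ell^\alpha f, g\rangle - \langle f, \ell^\alpha g\rangle$ and pull the eigenvalues out of each inner product. Since $p$ and $r$ are real-valued, $\lambda$ is a real spectral parameter, and the inner product on $\mathcal{L}^\alpha_2(0,\pi)$ is the real pairing introduced at the start of Section 2, this yields
\[
\langle \ell^\alpha f, g\rangle - \langle f, \ell^\alpha g\rangle = (\lambda_1 - \lambda_2)\,\langle f, g\rangle.
\]
Next I would invoke the self-adjointness just proved — equivalently, the Green-type identity \eqref{2.1}, whose boundary contribution $\{(f_2 g_1 - f_1 g_2)\chi_F(x)\}_{x=0}^{\pi}$ vanishes since $f_1(0)=g_1(0)=f_1(\pi)=g_1(\pi)=0$ — to conclude that the left-hand side above is zero. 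Dividing by the nonzero factor $\lambda_1 - \lambda_2$ then gives $\langle f, g\rangle = 0$, which is the desired orthogonality.

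There is essentially no obstacle here; the only point that needs a moment's care is that the eigenvalue $\lambda_2$ may be moved out of the second slot of the inner product without a complex conjugate. This is automatic because the setup of the problem (real coefficients, real spectral parameter, real inner product) places us in a real Hilbert space, so the self-adjointness identity translates immediately into orthogonality for distinct eigenvalues.
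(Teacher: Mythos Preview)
Your proof is correct. Substantively it rests on the same Green-type identity \eqref{2.1} and the same boundary cancellation as the paper, so there is no genuine mathematical difference; the paper simply redoes that computation from scratch---writing out the four scalar equations for $f$ and $g$, multiplying by $g_1,g_2,-f_1,-f_2$, summing, and integrating---whereas you invoke the already-proved self-adjointness of $\ell^\alpha$ and read off the conclusion in one line. Your route is the cleaner packaging of the same argument; the paper's explicit version has the minor advantage of being self-contained and making the vanishing boundary term visible again at this spot.
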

\begin{proof}
Since $f(x,\lambda_1)$ and $g(x,\lambda_2)$ are solutions of \eqref{1.1}, we have
\begin{align*}
D_F^\alpha f_2 (x,\lambda_1)&- p(x) f_1 (x,\lambda_1)=\lambda_1 f_1 (x,\lambda_1),\\
-D_F^\alpha f_1 (x,\lambda_1)&+ r(x)f_2(x,\lambda_1)=\lambda_1 f_2 (x,\lambda_1),\\
D_F^\alpha g_2 (x,\lambda_2)&- p(x)g_1(x,\lambda_2)=\lambda_2 g_1 (x,\lambda_2),\\
-D_F^\alpha g_1(x, \lambda_2) &+r(x)g_2(x,\lambda_2)=\lambda_2 g_2 (x,\lambda_2).
\end{align*}
Multiplying these equations by $g_1(x,\lambda_2)$, $g_2(x,\lambda_2)$, $-f_1(x,\lambda_1)$, and $-f_2(x,\lambda_1)$, respectively, and summing, we get
\begin{align*}
& D_F^\alpha f_2 (x,\lambda_1)\,g_1 (x, \lambda_2) 
+ f_2 (x,\lambda_1)\, D_F^\alpha g_1(x,\lambda_2) \\
&\quad - D_F^\alpha f_1(x,\lambda_1)\,g_2(x,\lambda_2)
- f_1(x,\lambda_1)\,D_F^\alpha g_2(x,\lambda_2)\\
&\quad = (\lambda_1-\lambda_2)
\big(f_1 (x,\lambda_1) g_1 (x,\lambda_2)+f_2 (x,\lambda_1) g_2 (x,\lambda_2)\big).
\end{align*}
That left-hand side is 
\[
\int_{0}^{\pi} D_F^\alpha \big(f_2 (x,\lambda_1) g_1 (x,\lambda_2)-f_1 (x,\lambda_1) g_2 (x,\lambda_2)\big) \,\dx,
\]
which becomes a boundary term that vanishes by conditions \eqref{1.2}, \eqref{1.3}. Hence 
\[
 (\lambda_1-\lambda_2)\int_0^\pi \big[f_1(x,\lambda_1)g_1(x,\lambda_2)
 +f_2(x,\lambda_1)g_2(x,\lambda_2)\big] \dx = 0.
\]
Since $\lambda_1 \neq \lambda_2$, the inner product must be zero, so the eigenfunctions are orthogonal.
\end{proof}

\begin{corollary}
The eigenvalues of the boundary value problem \eqref{1.1}--\eqref{1.3} are real.
\end{corollary}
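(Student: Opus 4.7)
The plan is a standard Sturm--Liouville-style conjugation argument, pushing the previous orthogonality lemma slightly into the complex setting. Suppose for contradiction that the boundary value problem \eqref{1.1}--\eqref{1.3} has a (possibly) complex eigenvalue $\lambda_0=\mu+i\nu$ with $\nu\neq 0$ and corresponding nontrivial eigenfunction $f(x,\lambda_0)=(f_1,f_2)^T$. I would first take the complex conjugate of the system \eqref{1.1}; since the coefficient functions $p(x)$ and $r(x)$ are real-valued and $D_F^\alpha$ acts componentwise, this shows that $\overline{\lambda_0}$ is also an eigenvalue, with eigenfunction $\overline{f(x,\lambda_0)}=(\overline{f_1},\overline{f_2})^T$, satisfying the same real boundary conditions \eqref{1.2}, \eqref{1.3}.

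Next I would invoke the preceding lemma with $\lambda_1=\lambda_0$, $\lambda_2=\overline{\lambda_0}$, $g=\overline{f}$. The algebraic computation in the proof of that lemma (multiply the four scalar equations, sum, recognize the $F^\alpha$-total derivative, and apply the fundamental theorem of $F^\alpha$-calculus together with the boundary conditions) uses nothing about $f,g$ beyond the fact that they solve \eqref{1.1}--\eqref{1.3} and that $p,r$ are real; hence it carries over verbatim to complex-valued solutions. This yields
\[
(\lambda_0-\overline{\lambda_0})\int_0^\pi \bigl(f_1(x,\lambda_0)\,\overline{f_1(x,\lambda_0)}+f_2(x,\lambda_0)\,\overline{f_2(x,\lambda_0)}\bigr)\dx=0,
\]
that is, $2i\nu\int_0^\pi(|f_1|^2+|f_2|^2)\dx=0$.

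Since $\nu\neq 0$, this forces $\int_0^\pi(|f_1|^2+|f_2|^2)\dx=0$. Because $|f_1|^2+|f_2|^2\ge 0$ and $Sch(f)$ is an $\alpha$-perfect set on which the $F^\alpha$-integral is a genuine positive measure-theoretic integral, the vanishing of this $\mathcal{L}^\alpha_2$-norm would force $f\equiv 0$ on the support, contradicting the nontriviality of the eigenfunction. Therefore $\nu=0$ and $\lambda_0\in\mathbb{R}$.

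I do not expect any real obstacle here: the only subtlety is that the inner product given in Section~2 and used in the orthogonality lemma is the real bilinear form rather than the Hermitian one, so one must verify (as above) that the lemma's computation is still valid when one of the arguments is the complex conjugate of an eigenfunction. Once that is observed, positivity of the $F^\alpha$-integral against $|f_1|^2+|f_2|^2$ on the $\alpha$-perfect support immediately closes the argument.
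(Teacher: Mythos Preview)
Your argument is correct and is exactly the standard conjugation argument that the paper is tacitly invoking: in the paper the corollary is stated without proof, immediately after the orthogonality lemma, so the intended justification is precisely to rerun that lemma's computation with $g=\overline{f}$ and $\lambda_2=\overline{\lambda_1}$, as you do. One could equally well cite Theorem~1 (self-adjointness) directly, but your route via the lemma is the same in spirit and handles the real-bilinear versus Hermitian issue explicitly, which the paper glosses over.
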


Let $\varphi(\cdot,\lambda)=\begin{pmatrix}
\varphi_1 (\cdot,\lambda) \\
\varphi_2 (\cdot,\lambda)
\end{pmatrix}$ and $\psi(\cdot,\lambda)=\begin{pmatrix}
\psi_1 (\cdot,\lambda) \\
\psi_2 (\cdot,\lambda)
\end{pmatrix}$ be the solutions of \eqref{1.1} under the initial conditions
\begin{equation} \label{2.2}
\varphi_1 (0,\lambda)=0, \ \varphi_2 (0,\lambda)=1, \ \psi_1 (\pi,\lambda)=0, \ \psi_2 (\pi,\lambda)=1.
\end{equation}
Then
\begin{equation} \label{2.3}
U_1 (\varphi)=U_2 (\psi)=0.
\end{equation}
Denote 
\begin{equation} \label{2.4}
\Delta(\lambda)=\varphi_2 (\cdot, \lambda)\,\psi_1(\cdot, \lambda)-\varphi_1 (\cdot, \lambda)\,\psi_2 (\cdot, \lambda).
\end{equation}
This function $\Delta(\lambda)$ is called the characteristic function of \eqref{1.1}--\eqref{1.3}. One checks that $\Delta(\lambda)$ does not depend on $x$ and is entire in $\lambda$. It has an at most countable set of zeros $\{\lambda_n\}$.
It can be easily seen that the characteristic function does not depend on $x$. Indeed,
\begin{align*} 
D_F^\alpha \Delta (\lambda) &= D_F^\alpha \varphi_2 (\cdot, \lambda) \psi_1 (\cdot, \lambda)+ \varphi_2 (\cdot, \lambda) D_F^\alpha \psi (\cdot, \lambda)-D_F^\alpha \varphi_1 (\cdot, \lambda)\\
&-\varphi(\cdot, \lambda) D_F^\alpha \psi_2 (\cdot, \lambda)\stackrel {\eqref{1.1}}{=} \big(p(\cdot)-\lambda\big) \varphi_1 (\cdot, \lambda)\psi_1 (\cdot, \lambda)\\
&+ \varphi_2 (\cdot, \lambda) \big(r(\cdot)-\lambda\big) \psi_2(\cdot, \lambda)+\psi_2 (\cdot,\lambda) \big(\lambda-r(\cdot)\big)\varphi_2 (\cdot, \lambda)\\
&-\varphi_1 (\cdot, \lambda) \big(\lambda+p(\cdot)\big)\psi_1 (\cdot, \lambda)=0.
\end{align*}
Substituting $x=0$ and $x=\pi$ in \eqref{2.4} and taking \eqref{2.2} into consideration we have
\begin{equation} \label{2.5}
\Delta(\lambda)=U_1 (\psi)=-U_2 (\varphi).
\end{equation}

\begin{theorem}
    The zeros $\{\lambda_n\}$ of the characteristic function coincide with the eigenvalues of the boundary value problem \eqref{1.1}--\eqref{1.3}. The functions $\varphi(x,\lambda_n)$ and $\psi(x,\lambda_n)$ are eigenfunctions, and there exists a sequence $\{\beta_n\}$ such that 
    \begin{equation} \label{2.6}
        \psi(x,\lambda_n)=\beta_n \,\varphi (x,\lambda_n), \ \beta_n \neq 0.
    \end{equation}
\end{theorem}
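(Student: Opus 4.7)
The plan is to prove this as a pair of implications plus the proportionality claim. First I would show that every zero $\lambda_n$ of $\Delta$ is an eigenvalue with $\varphi(\cdot,\lambda_n)$ and $\psi(\cdot,\lambda_n)$ both eigenfunctions, and then show conversely that every eigenvalue must be a zero of $\Delta$ by a fundamental-system argument; finally I would extract the collinearity \eqref{2.6} from the vanishing of the Wronskian-type quantity $\Delta$.

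For the forward direction, suppose $\Delta(\lambda_n)=0$. By \eqref{2.3}, $\varphi(\cdot,\lambda_n)$ already satisfies $U_1(\varphi)=0$, and the identity \eqref{2.5} gives $U_2(\varphi)=-\Delta(\lambda_n)=0$, so $\varphi(\cdot,\lambda_n)$ satisfies both boundary conditions. Its nontriviality is immediate from the initial data $\varphi_2(0,\lambda)=1$ in \eqref{2.2}. The same reasoning, starting from $U_2(\psi)=0$ and $U_1(\psi)=\Delta(\lambda_n)=0$, shows that $\psi(\cdot,\lambda_n)$ is also a nontrivial solution of the BVP. Hence $\lambda_n$ is an eigenvalue and both $\varphi(\cdot,\lambda_n),\psi(\cdot,\lambda_n)$ are eigenfunctions.

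For the converse, suppose $\lambda_0$ is an eigenvalue with eigenfunction $y$, and assume for contradiction that $\Delta(\lambda_0)\neq 0$. Since \eqref{1.1} is a linear first-order system for the $2$-vector $f$, its solution space at fixed $\lambda$ is two-dimensional, and $\Delta(\lambda_0)\neq 0$ is exactly the non-vanishing of the Wronskian-type determinant from \eqref{2.4}, so $\varphi(\cdot,\lambda_0)$ and $\psi(\cdot,\lambda_0)$ form a fundamental system. Write $y=c_1\varphi(\cdot,\lambda_0)+c_2\psi(\cdot,\lambda_0)$. Applying the boundary functionals and using \eqref{2.3} with \eqref{2.5},
\begin{equation*}
0=U_1(y)=c_2\,U_1(\psi)=c_2\Delta(\lambda_0),\qquad 0=U_2(y)=c_1\,U_2(\varphi)=-c_1\Delta(\lambda_0),
\end{equation*}
which forces $c_1=c_2=0$ and contradicts $y\not\equiv 0$. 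Hence $\Delta(\lambda_0)=0$.

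Finally, for \eqref{2.6}, I would invoke the fact already proved above the theorem statement that $\Delta$ is independent of $x$. At a zero $\lambda_n$, the vectors $\varphi(x,\lambda_n)$ and $\psi(x,\lambda_n)$ satisfy $\varphi_2\psi_1-\varphi_1\psi_2\equiv 0$, i.e.\ their $2\times 2$ coefficient determinant vanishes for every $x$, so they are linearly dependent as vector-valued solutions of the first-order system \eqref{1.1}. Since $\psi(\pi,\lambda_n)=(0,1)^T\neq 0$ by \eqref{2.2}, $\psi(\cdot,\lambda_n)$ is nontrivial, so there exists a scalar $\beta_n$ with $\psi(x,\lambda_n)=\beta_n\varphi(x,\lambda_n)$ and $\beta_n\neq 0$. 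The main obstacle here is the one structural fact I am importing from the fractal setting: that the solution space of the first-order system \eqref{1.1} at fixed $\lambda$ is genuinely two-dimensional, so that a vanishing Wronskian-type determinant forces linear dependence. Once this is granted, as is standard in existence-uniqueness theory for $F^\alpha$-linear systems (see, e.g., \cite{Golmankhaneh2017lip,Allahverdiev2024}), the rest is algebraic bookkeeping with \eqref{2.3} and \eqref{2.5}.
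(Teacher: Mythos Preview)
Your proof is correct and follows essentially the same strategy as the paper's: both directions are argued separately, with the forward direction using \eqref{2.3} and \eqref{2.5} to show $\varphi$ and $\psi$ satisfy both boundary conditions when $\Delta$ vanishes, and the proportionality \eqref{2.6} coming from the vanishing Wronskian-type determinant. Your converse via a fundamental-system contradiction is a slightly more explicit variant of the paper's terse ``match $f_0$ with $\varphi$ or $\psi$'' (which implicitly uses uniqueness for the initial-value problem), but both rest on the same two-dimensionality of the solution space that you correctly flag as the one nontrivial import from the $F^\alpha$-setting.
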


\begin{proof}
If $\lambda_0$ is a zero of $\Delta(\lambda)$, i.e.\ $\Delta(\lambda_0)=0$, then by construction $\psi(\cdot, \lambda_0)$ is a multiple of $\varphi(\cdot, \lambda_0)$.  Both satisfy the boundary conditions, so $\lambda_0$ is indeed an eigenvalue, and $\psi(\cdot,\lambda_0)$, $\varphi(\cdot,\lambda_0)$ are eigenfunctions.

Conversely, if $\lambda_0$ is an eigenvalue and $f_0$ is a corresponding (nonzero) eigenfunction satisfying \eqref{1.2}--\eqref{1.3}, we can match $f_0$ with one of $\varphi(\cdot,\lambda_0)$ or $\psi(\cdot,\lambda_0)$ (depending on initial conditions), showing $\Delta(\lambda_0)=0$. One also sees each eigenvalue is simple from the geometric point of view.
\end{proof}

We define the weight numbers $\{\alpha_n\}$ of \eqref{1.1}--\eqref{1.3} by
\begin{equation} \label{2.7}
    \alpha_n := \int^\pi_0 \big[\varphi_1^2 (x,\lambda_n)+\varphi_2^2 (x,\lambda_n)\big]\dx. 
\end{equation}

\begin{lemma}
The following relation holds:
\begin{equation} \label{2.8}
    \beta_n \,\alpha_n = D_F^\alpha(\lambda_n),
\end{equation}
where $\beta_n$ are defined by \eqref{2.6} and 
$D_F^\alpha(\lambda)=D_{F,\lambda}^\alpha\Delta(\lambda)$.
\end{lemma}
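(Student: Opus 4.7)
The plan is to mimic the fractal Green/Lagrange identity used in the orthogonality lemma, but this time applied to $\varphi(\cdot,\lambda)$ and $\varphi(\cdot,\lambda_n)$ at two \emph{different} values of the spectral parameter, and then divide by $\lambda-\lambda_n$ and pass to the $F^\alpha$-limit $\lambda\to\lambda_n$. The target formula \eqref{2.8} is precisely the fractal analogue of the classical Sturm--Liouville/Dirac identity $\beta_n\alpha_n=\dot\Delta(\lambda_n)$, with the ordinary $\lambda$-derivative replaced by $D_{F,\lambda}^\alpha$.

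Concretely, I would first write down the two Dirac systems
\[
\ell^\alpha\varphi(x,\lambda)=\lambda\,\varphi(x,\lambda),\qquad \ell^\alpha\varphi(x,\lambda_n)=\lambda_n\,\varphi(x,\lambda_n),
\]
multiply them crosswise by the components of the other vector (exactly as in the proof of the orthogonality lemma), subtract, and recognise the left-hand side as
\[
D_F^\alpha\bigl(\varphi_2(x,\lambda)\varphi_1(x,\lambda_n)-\varphi_1(x,\lambda)\varphi_2(x,\lambda_n)\bigr).
\]
An $F^\alpha$-integration from $0$ to $\pi$ then yields
\[
(\lambda-\lambda_n)\int_0^\pi\!\bigl[\varphi_1(x,\lambda)\varphi_1(x,\lambda_n)+\varphi_2(x,\lambda)\varphi_2(x,\lambda_n)\bigr]\dx
=\bigl\{\bigl(\varphi_2(x,\lambda)\varphi_1(x,\lambda_n)-\varphi_1(x,\lambda)\varphi_2(x,\lambda_n)\bigr)\chi_F(x)\bigr\}_0^\pi.
\]

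Next I would evaluate the boundary term. At $x=0$ the initial conditions \eqref{2.2} give $\varphi_1(0,\cdot)=0$ and $\varphi_2(0,\cdot)=1$, so that contribution vanishes. At $x=\pi$ I use the relation \eqref{2.6}: $\varphi(x,\lambda_n)=\beta_n^{-1}\psi(x,\lambda_n)$, combined with $\psi_1(\pi,\lambda_n)=0$ and $\psi_2(\pi,\lambda_n)=1$, to obtain $\varphi_1(\pi,\lambda_n)=0$ and $\varphi_2(\pi,\lambda_n)=\beta_n^{-1}$. Finally \eqref{2.5} gives $\varphi_1(\pi,\lambda)=-\Delta(\lambda)$. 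Putting everything together,
\[
(\lambda-\lambda_n)\int_0^\pi\!\bigl[\varphi_1(x,\lambda)\varphi_1(x,\lambda_n)+\varphi_2(x,\lambda)\varphi_2(x,\lambda_n)\bigr]\dx=\frac{\Delta(\lambda)}{\beta_n}.
\]
Dividing both sides by $\lambda-\lambda_n$, recalling $\Delta(\lambda_n)=0$, and letting $\lambda\to\lambda_n$ in the $F^\alpha$ sense collapses the integrand to $\varphi_1^2(x,\lambda_n)+\varphi_2^2(x,\lambda_n)$, which is exactly $\alpha_n$ from \eqref{2.7}, while the right-hand side becomes $\beta_n^{-1}D_{F,\lambda}^\alpha\Delta(\lambda_n)$. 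Rearranging gives \eqref{2.8}.

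The main obstacle is the very last step: justifying that the difference quotient $\Delta(\lambda)/(\lambda-\lambda_n)$ converges to $D_{F,\lambda}^\alpha\Delta(\lambda_n)$ in the fractal-calculus sense, i.e.\ that the $F^\alpha$-derivative with respect to the spectral parameter really agrees with the limit obtained here. This requires $\Delta(\lambda)$ to be $F^\alpha$-differentiable in $\lambda$ on an $\alpha$-perfect support and uses the quotient/L'H\^opital-type rule for $F^\alpha$-calculus established in \cite{Parvate2009,Golmankhaneh2022}. A secondary technical point is the tacit assumption $\chi_F(\pi)=1$ (i.e.\ the endpoint lies in the fractal support $F$), which is already built into the boundary-term evaluation in \eqref{2.1} and is inherited here without extra work.
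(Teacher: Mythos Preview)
Your argument is correct and follows the same Lagrange-identity-plus-limit strategy as the paper, but with one cosmetic difference worth noting. The paper pairs $\varphi(\cdot,\lambda_n)$ with $\psi(\cdot,\lambda)$ rather than with $\varphi(\cdot,\lambda)$: the boundary terms are then read off directly from the initial data \eqref{2.2} and from \eqref{2.5}, giving $\Delta(\lambda_n)-\Delta(\lambda)$ without ever invoking \eqref{2.6}; the relation $\psi(\cdot,\lambda_n)=\beta_n\varphi(\cdot,\lambda_n)$ is applied only at the very end, inside the integral, after the limit $\lambda\to\lambda_n$ has been taken. Your $\varphi$--$\varphi$ pairing reverses this bookkeeping: you use \eqref{2.6} up front to evaluate the boundary contribution at $x=\pi$, and then the integral is already $\alpha_n$ in the limit. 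Both routes are equivalent. The paper also writes the prefactor as $S_F^\alpha(\lambda_n)-S_F^\alpha(\lambda)$ rather than $\lambda-\lambda_n$, which makes the passage to the $F^\alpha$-difference quotient immediate and sidesteps exactly the ``main obstacle'' you flagged; you could do the same in your version.
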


\begin{proof}
Since $\varphi(x,\lambda_n)$ and $\psi(x,\lambda)$ solve \eqref{1.1}, multiply them in a standard way and integrate over $[0,\pi]$ to get
\[
\big(S_F^\alpha(\lambda_n)-S_F^\alpha(\lambda)\big)\int_0^\pi \big[\varphi_1(x,\lambda_n)\psi_1(x,\lambda)
+ \varphi_2(x,\lambda_n)\psi_2(x,\lambda)\big]\dx
= \Delta(\lambda_n)-\Delta(\lambda).
\]
As $\lambda\to\lambda_n$, the difference quotient leads to
\[
D_F^\alpha(\lambda_n)
= \int_0^\pi \big[\varphi_1(x,\lambda_n)\,\psi_1(x,\lambda_n)
+ \varphi_2(x,\lambda_n)\,\psi_2(x,\lambda_n)\big]\dx.
\]
Substituting \eqref{2.6} into the expression above, we obtain
\[
D_F^\alpha(\lambda_n)
= \beta_n \int_0^\pi \big[\varphi_1^2(x,\lambda_n)\
+ \varphi_2^2(x,\lambda_n)\,\big]\dx.
\]
Now, using \eqref{2.7}, we simplify this to
\[
D_F^\alpha(\lambda_n)
= \beta_n \alpha_n.
\]
Thus, we arrive at \eqref{2.8}.\end{proof}

\begin{corollary}
    The eigenvalues of \eqref{1.1}--\eqref{1.3} are simple from the algebraic point of view, i.e.\ $D_F^\alpha\Delta(\lambda_n) \neq 0$. 
\end{corollary}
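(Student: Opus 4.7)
The plan is to deduce this corollary directly from the identity \eqref{2.8} proved in the preceding lemma, namely $\beta_n\alpha_n = D_F^\alpha(\lambda_n)$. Since a product of nonzero scalars is nonzero, it suffices to verify separately that $\beta_n\neq 0$ and that $\alpha_n\neq 0$; the first is already built into the definition \eqref{2.6}, so the genuine content is the positivity of the weight number $\alpha_n$.

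First I would recall that $\varphi(x,\lambda_n)$ is, by the previous theorem, a nontrivial vector-valued eigenfunction: its initial data \eqref{2.2} force $\varphi_2(0,\lambda)=1$, so $\varphi(\cdot,\lambda_n)$ cannot vanish identically on $[0,\pi]$. Next I would argue, using the standard properties of the $F^\alpha$-integral on $\alpha$-perfect sets, that the integrand $\varphi_1^2(x,\lambda_n)+\varphi_2^2(x,\lambda_n)$ in \eqref{2.7} is nonnegative and not identically zero on the support $F$, and therefore $\alpha_n > 0$. This is essentially the only nontrivial step: the fractal analogue of the elementary fact that $\|\varphi\|^2>0$ whenever $\varphi\not\equiv 0$, which holds because the inner product introduced at the start of Section 2 is positive definite on $\mathcal{L}_2^\alpha(0,\pi)$.

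With $\beta_n\neq 0$ guaranteed by \eqref{2.6} and $\alpha_n>0$ as above, the identity \eqref{2.8} gives $D_F^\alpha\Delta(\lambda_n)=\beta_n\alpha_n\neq 0$, which is exactly the claimed algebraic simplicity of the eigenvalue $\lambda_n$. The main obstacle, as noted, is the positivity assertion for $\alpha_n$: one must be sure that the fractal inner product on $\mathcal{L}_2^\alpha(0,\pi)$ behaves like a true inner product (i.e.\ $\langle f,f\rangle=0 \Rightarrow f=0$ on $F$), so that a continuous nontrivial solution of \eqref{1.1} cannot have vanishing $F^\alpha$-norm. Once this is invoked from the framework of \cite{Parvate2009, Golmankhaneh2022}, the corollary follows in one line.
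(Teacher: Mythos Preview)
Your argument is correct and follows the same route as the paper: both deduce the corollary immediately from the identity \eqref{2.8} together with $\beta_n\neq 0$ (from \eqref{2.6}) and $\alpha_n\neq 0$. The paper's proof is terser, simply asserting $\alpha_n\neq 0$, whereas you supply the justification via positive definiteness of the inner product; otherwise the proofs are identical.
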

\begin{proof}
     Since $\alpha_n \neq 0$, $\beta_n \neq 0$, we get by virtue of \eqref{2.8} that $D_F^\alpha\Delta(\lambda_n) \neq 0$.
\end{proof}
\section{Numerical Examples}

In this section, we present numerical examples to illustrate the applicability of our conclusions. We used the fourth-order classical Runge-Kutta method and the fourth-order fractal Runge-Kutta method as described in \cite{GolmankhanehZayedNum}, whose equations are given in fractal form below. For stiff problems, other numerical techniques may be more suitable. We have plotted solutions to these equations using the Matplotlib Python package. 

\[
y_{n+1}=y_{n}+h_{F}^{\alpha}\left(\frac{k_{n 1}+2 k_{n 2}+2 k_{n 3}+k_{n 4}}{6}\right),
\]
where
\begin{align*} 
& k_{n 1}=f\left(S_{F}^{\alpha}(x), y_{n}\right), \\
& k_{n 2}=f\left(S_{F}^{\alpha}(x)+\frac{1}{2} h_{F}^{\alpha}, y_{n}+\frac{1}{2} h_{F}^{\alpha} k_{n 1}\right), \\
& k_{n 3}=f\left(S_{F}^{\alpha}(x)+\frac{1}{2} h_{F}^{\alpha}, y_{n}+\frac{1}{2} h_{F}^{\alpha} k_{n 2}\right), \\
& k_{n 4}=f\left(S_{F}^{\alpha}(x)+h_{F}^{\alpha}, y_{n}+h_{F}^{\alpha} k_{n 3}\right), \\
& h_{F}^{\alpha} = S_{F}^{\alpha}(x_{n+1}) - S_{F}^{\alpha}(x_n).
\end{align*}

We approximated the integral staircase function $S_{F}^{\alpha}(x)$ by a power law $x^\alpha$ (an approximation valid for certain sets) described in \cite{Parvate2009}. In a problem where the exact structure of the fractal set $F$ is more crucial than the scaling behavior of $\alpha$, one can refer to the implementation in \cite{GolmankhanehZayedNum} of the coarse-grained mass function, $\gamma_{\delta}^{\alpha}(F, a, b)$.

Below are three examples considering \eqref{1.1}--\eqref{1.3} on the real line (with suitable boundary approximations).

\begin{example}
We define $p(x)=\frac{1}{1 + S_{F}^{\alpha}(x)}$, $q(x)=\frac{1}{1+(S_{F}^{\alpha}(x))^2}$, and scaling indices $\alpha = [0.8, 0.9, 1.0]$. The numerical eigenvalues (denoted $\tilde{\lambda}_n$) appear in Table~\ref{table1}. Errors for the case $\alpha=1$ are shown in Table~\ref{table2}.

\begin{table}[H]
\centering
\renewcommand{\arraystretch}{1.42} % Increases row height
\caption{Numerically computed eigenvalues $\tilde{\lambda}_n$ for various methods and $\alpha$.}
\label{table1}
\begin{tabular}{|c|c|c|c|c|c|}
\hline
\textbf{Method} & $\alpha$ & $\tilde{\lambda}_1$ & $\tilde{\lambda}_2$ & $\tilde{\lambda}_3$ & $\tilde{\lambda}_4$\\
\hline
Classical       & N/A      & 0.347524            & 1.176747            & 2.055970            & 3.020643\\
\hline
Fractal         & 0.8      & 0.413400            & 1.438434            & 2.566015            & -      \\
\hline
Fractal         & 0.9      & 0.378385            & 1.301643            & 2.296227            & -      \\
\hline
Fractal         & 1.0      & 0.347685            & 1.176925            & 2.056040            & 3.020692\\
\hline
\end{tabular}
\end{table}

\begin{table}[H]
\centering
\renewcommand{\arraystretch}{1.42} % Increases row height
\caption{Magnitude of error between classical and fractal methods for $\alpha=1$.}
\label{table2}
\begin{tabular}{|c|c|c|}
\hline
$\tilde{\lambda}_n$ & Classical & $|\Delta \tilde{\lambda}_n|$ \\
\hline
$\tilde{\lambda}_1$ & 0.347524  & $1.61\times 10^{-4}$\\
\hline
$\tilde{\lambda}_2$ & 1.176747  & $1.78\times 10^{-4}$\\
\hline
$\tilde{\lambda}_3$ & 2.055970  & $7.00\times 10^{-5}$\\
\hline
$\tilde{\lambda}_4$ & 3.020643  & $4.90\times 10^{-5}$\\
\hline
\end{tabular}
\end{table}

The plots of the first eigenfunctions $f_1 = y_1(x)$ and $f_2 = y_2(x)$ for the classical calculus and fractal calculus methods across the three scaling indices are shown below in Figure ~\ref{fig:combinedex1.1}.
\begin{figure}[H]
    \centering
    \caption{}
    \begin{subfigure}[b]{0.45\linewidth}
        \centering
        \includegraphics[width=\linewidth]{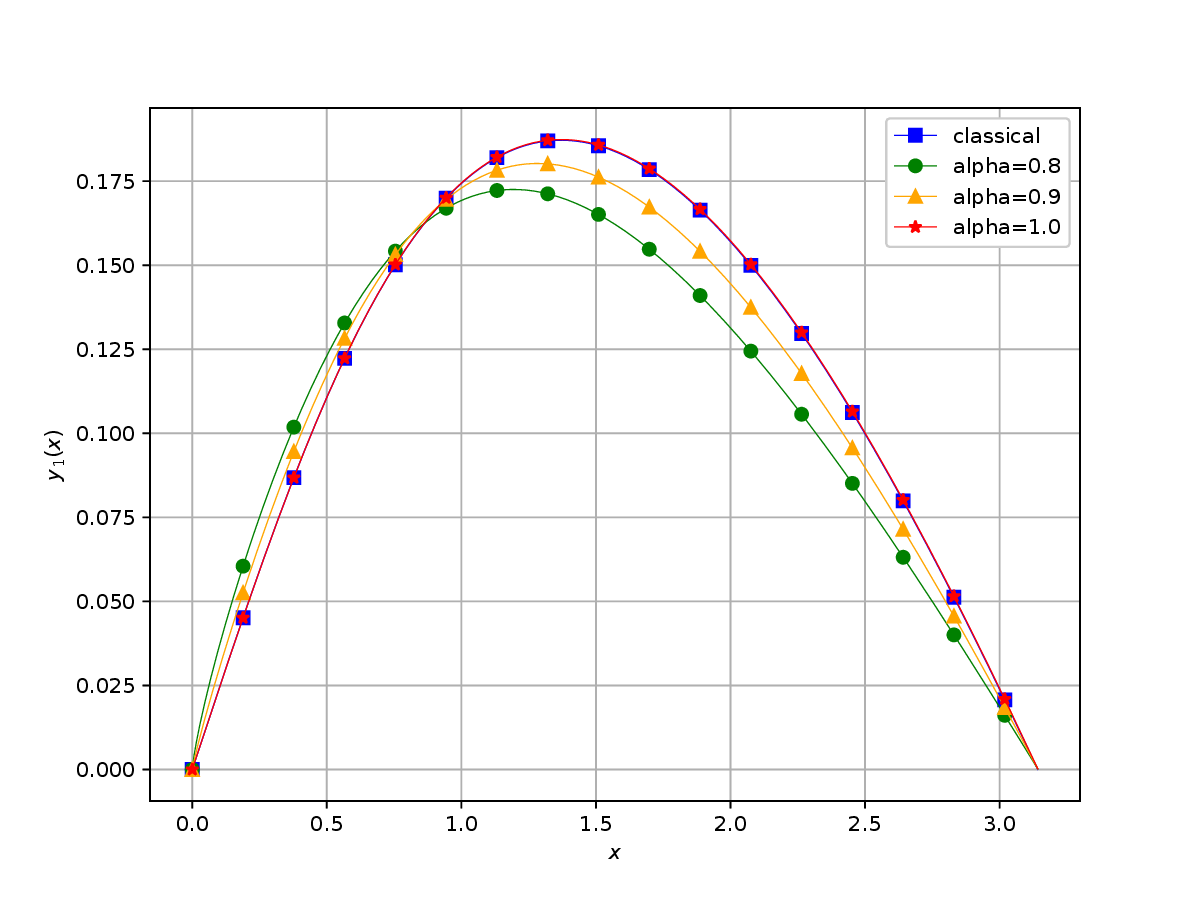}
    \end{subfigure}
    \hfill
    \begin{subfigure}[b]{0.45\linewidth}
        \centering
        \includegraphics[width=\linewidth]{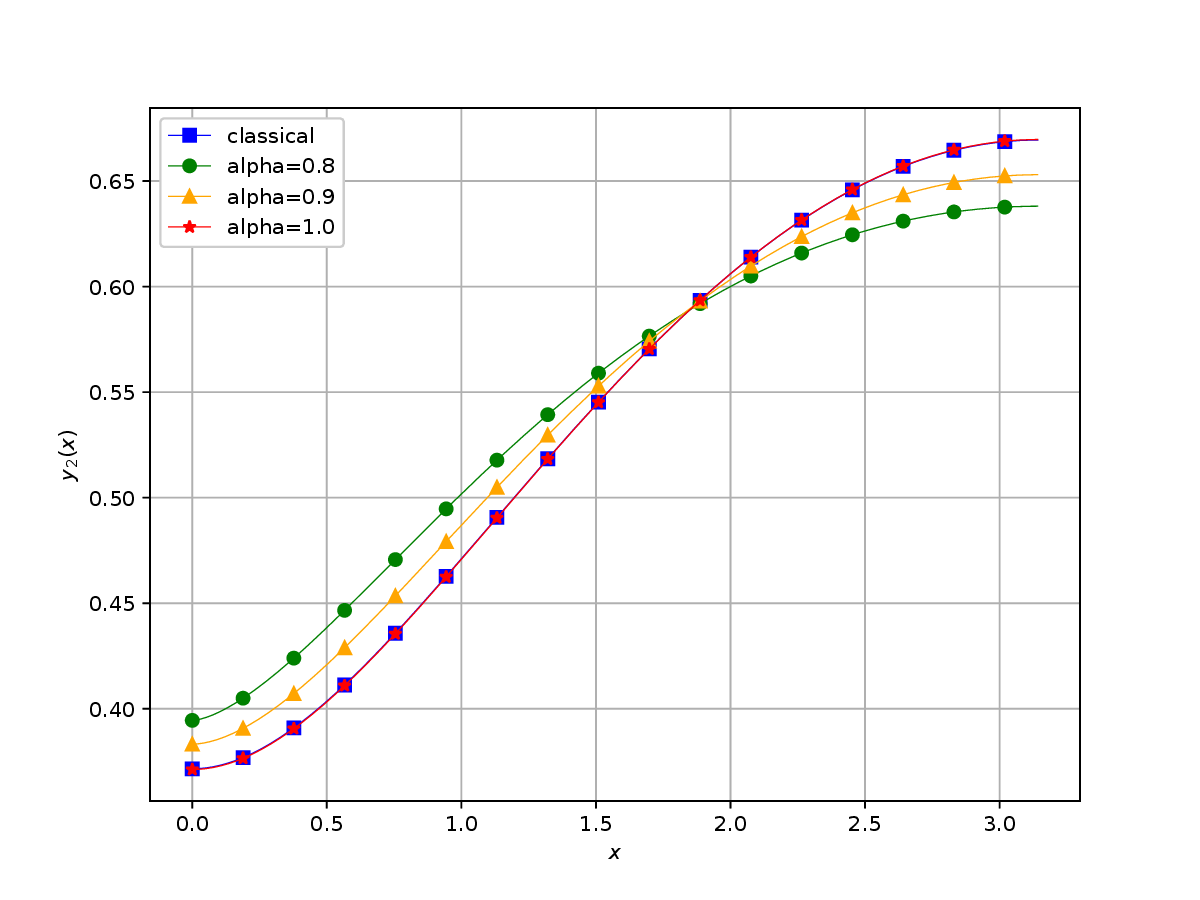}
    \end{subfigure}
    \label{fig:combinedex1.1}
\end{figure}

For easier visual verification that the classical and fractal methods agree when $\alpha = 1$, zoomed-in plots are also provided in Figure ~\ref{fig:combinedex1.2}. 

\begin{figure}[H]
    \centering
    \caption{}
    \begin{subfigure}[b]{0.45\linewidth}
        \centering
        \includegraphics[width=\linewidth]{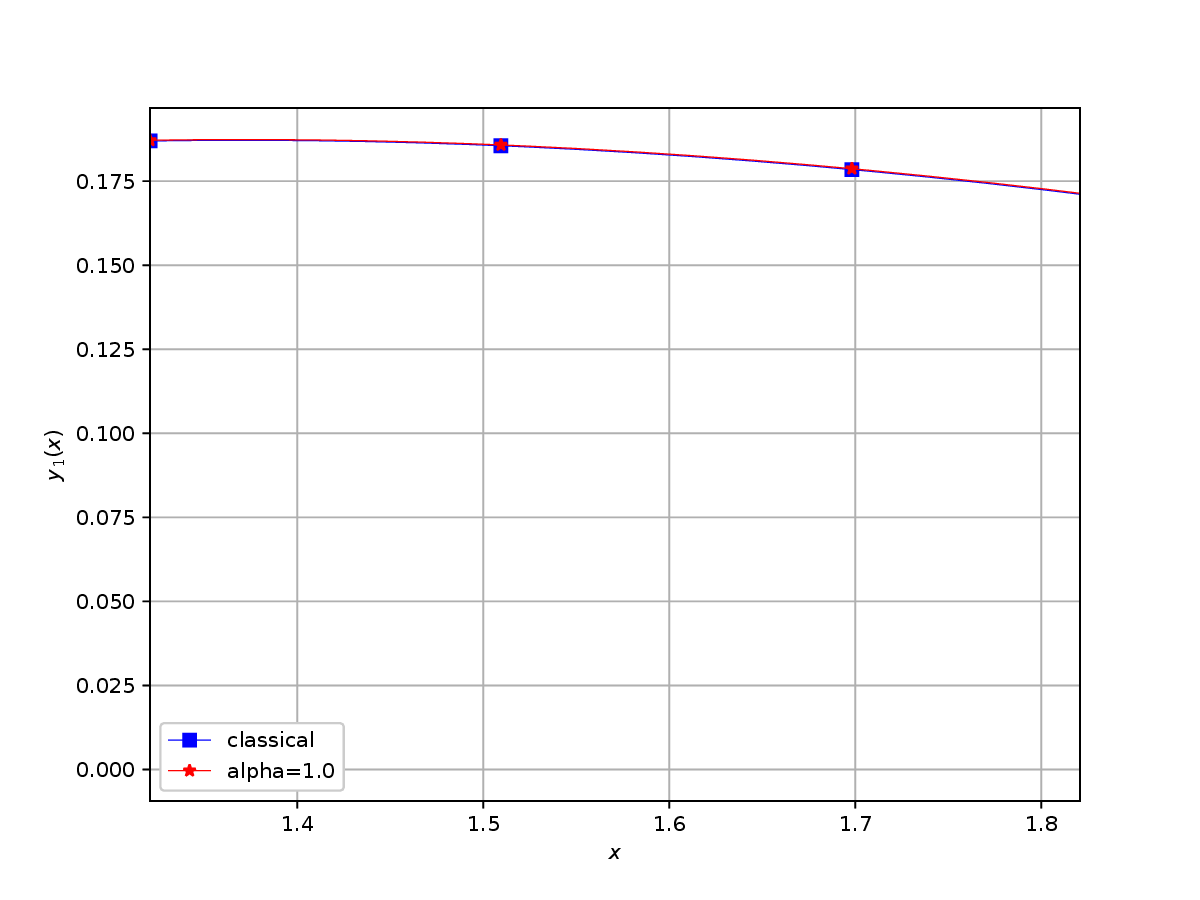}
    \end{subfigure}
    \hfill
    \begin{subfigure}[b]{0.45\linewidth}
        \centering
        \includegraphics[width=\linewidth]{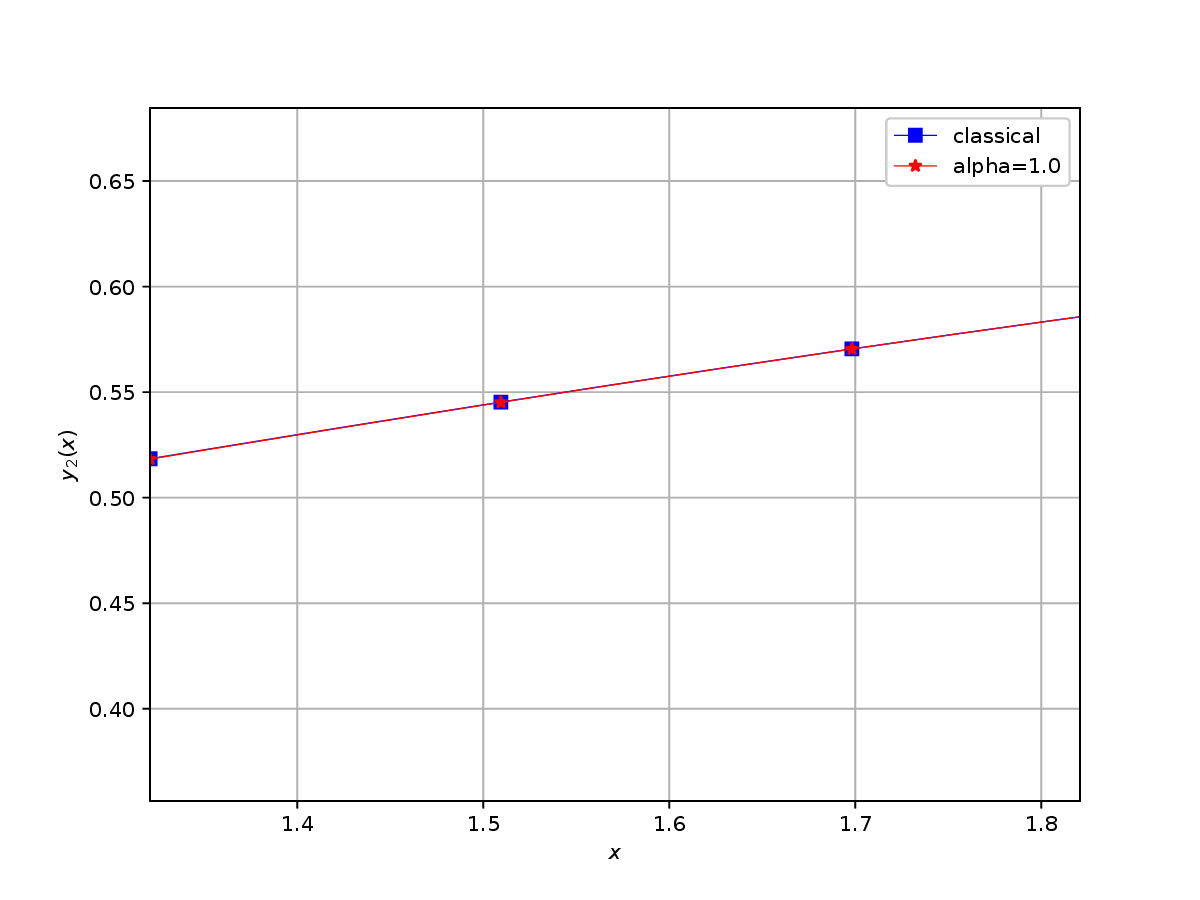}
    \end{subfigure}
    \label{fig:combinedex1.2}
\end{figure}
\end{example}

\begin{example}
Let $p(x) = S_{F}^{\alpha}(x) + 1$, $q(x) = (S_{F}^{\alpha}(x))^2 + 1$, and $\alpha = [0.8, 0.9, 1.0]$. The eigenvalues and errors are shown in Tables~\ref{table3} and \ref{table4}.

\begin{table}[H]
\centering
\renewcommand{\arraystretch}{1.42} % Increases row height
\caption{Numerically computed eigenvalues $\tilde{\lambda}_n$ for various methods and $\alpha$.}
\label{table3}
\begin{tabular}{|c|c|c|}
\hline
\textbf{Method} & $\alpha$ & $\tilde{\lambda}_1$\\
\hline
Classical       & N/A      & 1.544759\\
\hline
Fractal         & 0.8      & 1.516625\\
\hline
Fractal         & 0.9      & 1.530339\\
\hline
Fractal         & 1.0      & 1.544186\\
\hline
\end{tabular}
\end{table}

\begin{table}[H]
\centering
\renewcommand{\arraystretch}{1.42} % Increases row height
\caption{Magnitude of error between classical and fractal methods for $\alpha=1$.}
\label{table4}
\begin{tabular}{|c|c|c|}
\hline
$\tilde{\lambda}_n$ & Classical & $|\Delta \tilde{\lambda}_n|$\\
\hline
$\tilde{\lambda}_1$            & 1.544759 & $5.73\times 10^{-4}$\\
\hline
\end{tabular}
\end{table}

The plots of the first eigenfunctions $f_1 = y_1(x)$ and $f_2 = y_2(x)$ for the classical calculus and fractal calculus methods across the three scaling indices are shown below in Figure ~\ref{fig:combinedex2.1}.

\begin{figure}[H]
    \centering
    \caption{}
    \begin{subfigure}[b]{0.45\linewidth}
        \centering
        \includegraphics[width=\linewidth]{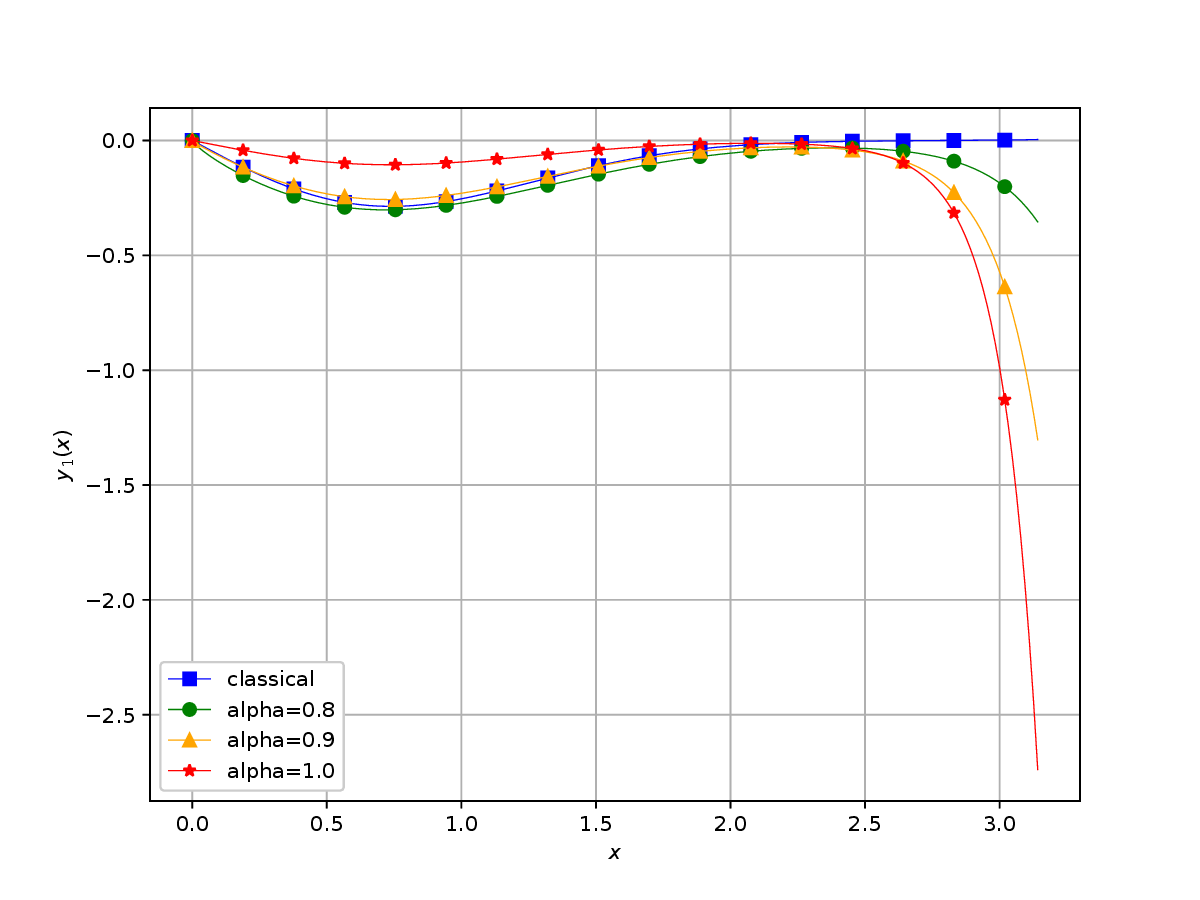}
    \end{subfigure}
    \hfill
    \begin{subfigure}[b]{0.45\linewidth}
        \centering
        \includegraphics[width=\linewidth]{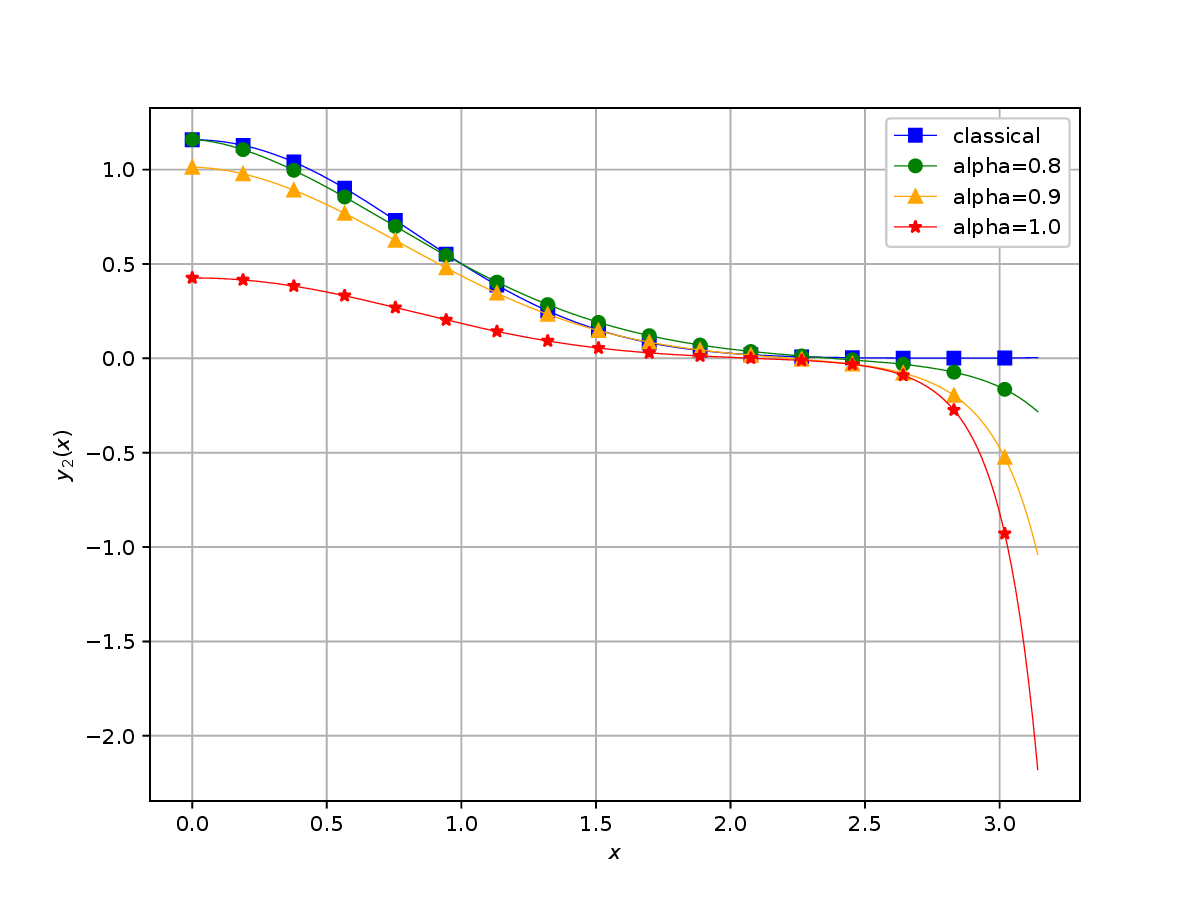}
    \end{subfigure}
    \label{fig:combinedex2.1}
\end{figure}

For easier visual verification that the classical and fractal methods nearly agree when $\alpha = 1$, zoomed-in plots are also provided in Figure ~\ref{fig:combined_2}. 

\begin{figure}[H]
    \centering
    \caption{}
    \begin{subfigure}[b]{0.45\linewidth}
        \centering
        \includegraphics[width=\linewidth]{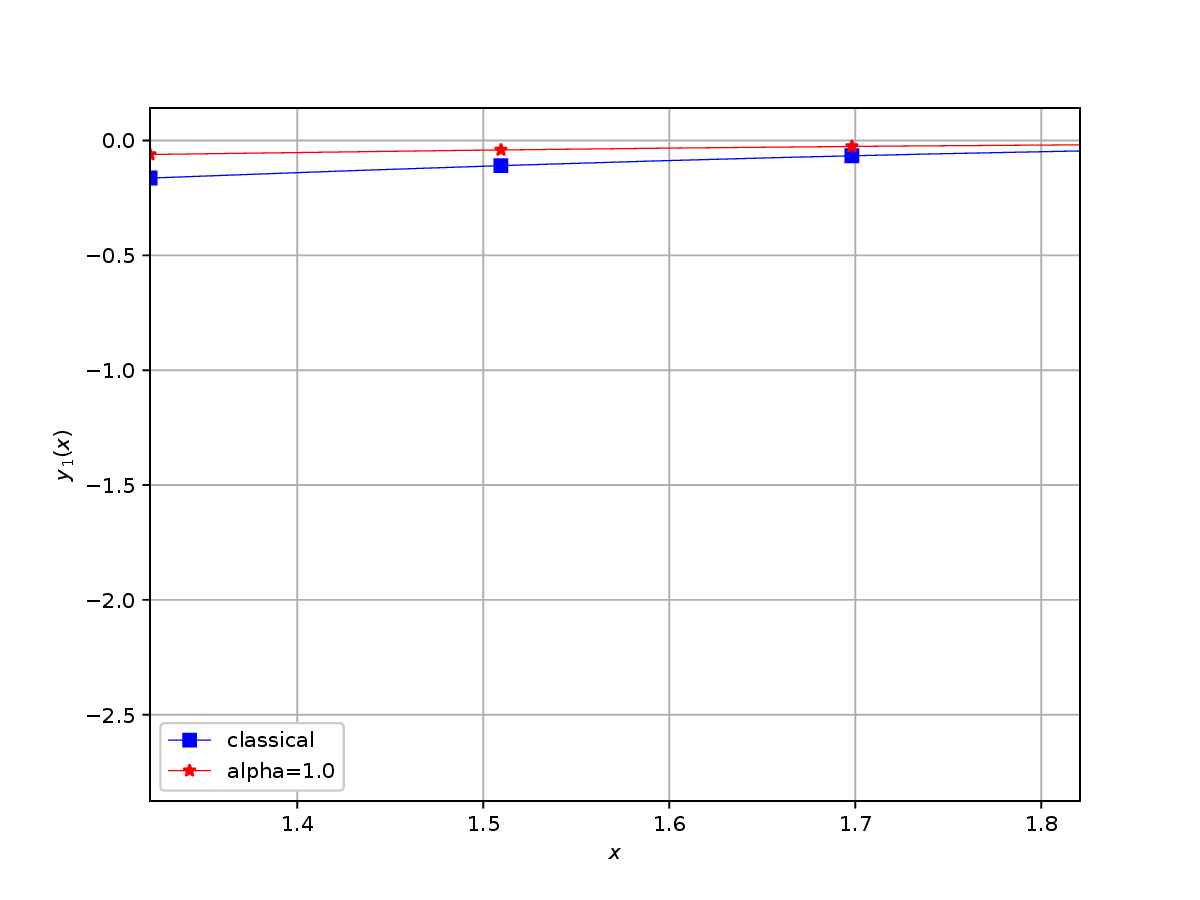}
    \end{subfigure}
    \hfill
    \begin{subfigure}[b]{0.45\linewidth}
        \centering
        \includegraphics[width=\linewidth]{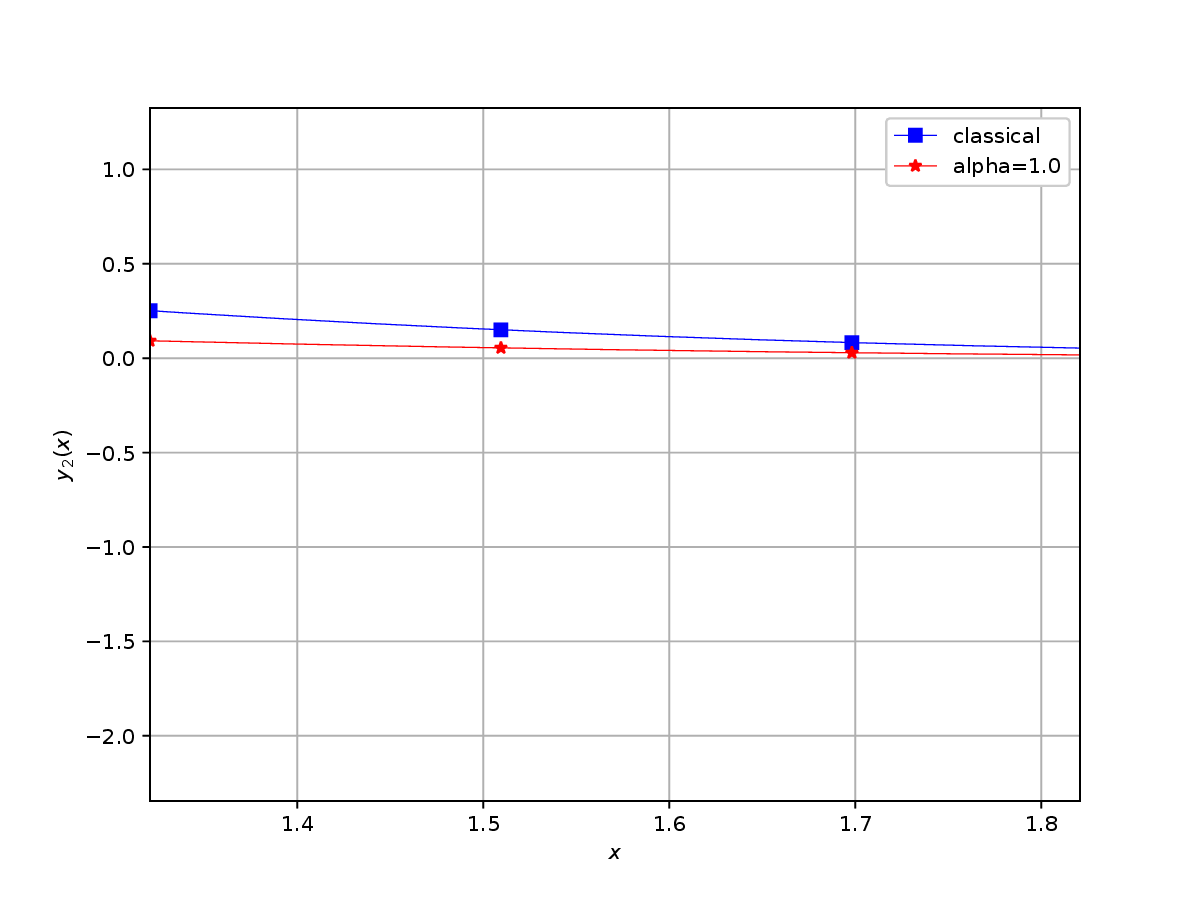}
    \end{subfigure}
    \label{fig:combined_2}
\end{figure} 
\end{example}

\begin{example}
Let $p(x) = e^{S_{F}^{\alpha}(x)}$, $q(x)= e^{-S_{F}^{\alpha}(x)}$, and $\alpha=[0.8, 0.9, 1.0]$. Tables~\ref{table5} and \ref{table6} show the eigenvalues and errors.

\begin{table}[H]
\centering
\renewcommand{\arraystretch}{1.42} % Increases row height
\caption{Numerically computed eigenvalues $\tilde{\lambda}_n$ for various methods and $\alpha$.}
\label{table5}
\begin{tabular}{|c|c|c|c|c|c|c|c|}
\hline
\textbf{Method} & $\alpha$ & $\tilde{\lambda}_1$ & $\tilde{\lambda}_2$ & $\tilde{\lambda}_3$ & $\tilde{\lambda}_4$ & \(\tilde{\lambda}_5\) & \(\tilde{\lambda}_6\) \\
\hline
Classical & N/A & 0.148677 & 0.458639 & 0.865004 & 1.452401 & 2.170184 & 2.965759\\
\hline
Fractal   & 0.8 & 0.210897 & 0.644622 & 1.301299 & 2.201887 & -        & -\\
\hline
Fractal   & 0.9 & 0.175896 & 0.542309 & 1.057334 & 1.790641 & 2.656072 & -\\
\hline
Fractal   & 1.0 & 0.148792 & 0.458986 & 0.865601 & 1.453235 & 2.171232 & 2.966991\\
\hline
\end{tabular}
\end{table}

\begin{table}[H]
\centering
\renewcommand{\arraystretch}{1.42} % Increases row height
\caption{Magnitude of error between classical and fractal methods for $\alpha=1$.}
\label{table6}
\begin{tabular}{|c|c|c|}
\hline
$\tilde{\lambda}_n$ & Classical & $|\Delta \tilde{\lambda}_n|$\\
\hline
$\tilde{\lambda}_1$ & 0.148677  & $1.15 \times 10^{-4}$\\
\hline
$\tilde{\lambda}_2$ & 0.458639  & $3.47 \times 10^{-4}$\\
\hline
$\tilde{\lambda}_3$ & 0.865004  & $5.97 \times 10^{-4}$\\
\hline
$\tilde{\lambda}_4$ & 1.452401  & $8.34 \times 10^{-4}$\\
\hline
$\tilde{\lambda}_5$ & 2.170184  & $1.05 \times 10^{-3}$\\
\hline
$\tilde{\lambda}_6$ & 2.965759  & $1.23 \times 10^{-3}$\\
\hline
\end{tabular}
\end{table}

The plots of the first eigenfunctions $f_1 = y_1(x)$ and $f_2 = y_2(x)$ for the classical calculus and fractal calculus methods across the three scaling indices are shown below in Figure ~\ref{fig:combined_ex3}.

\begin{figure}[H]
    \centering
    \caption{}
    \begin{subfigure}[b]{0.45\linewidth}
        \centering
        \includegraphics[width=\linewidth]{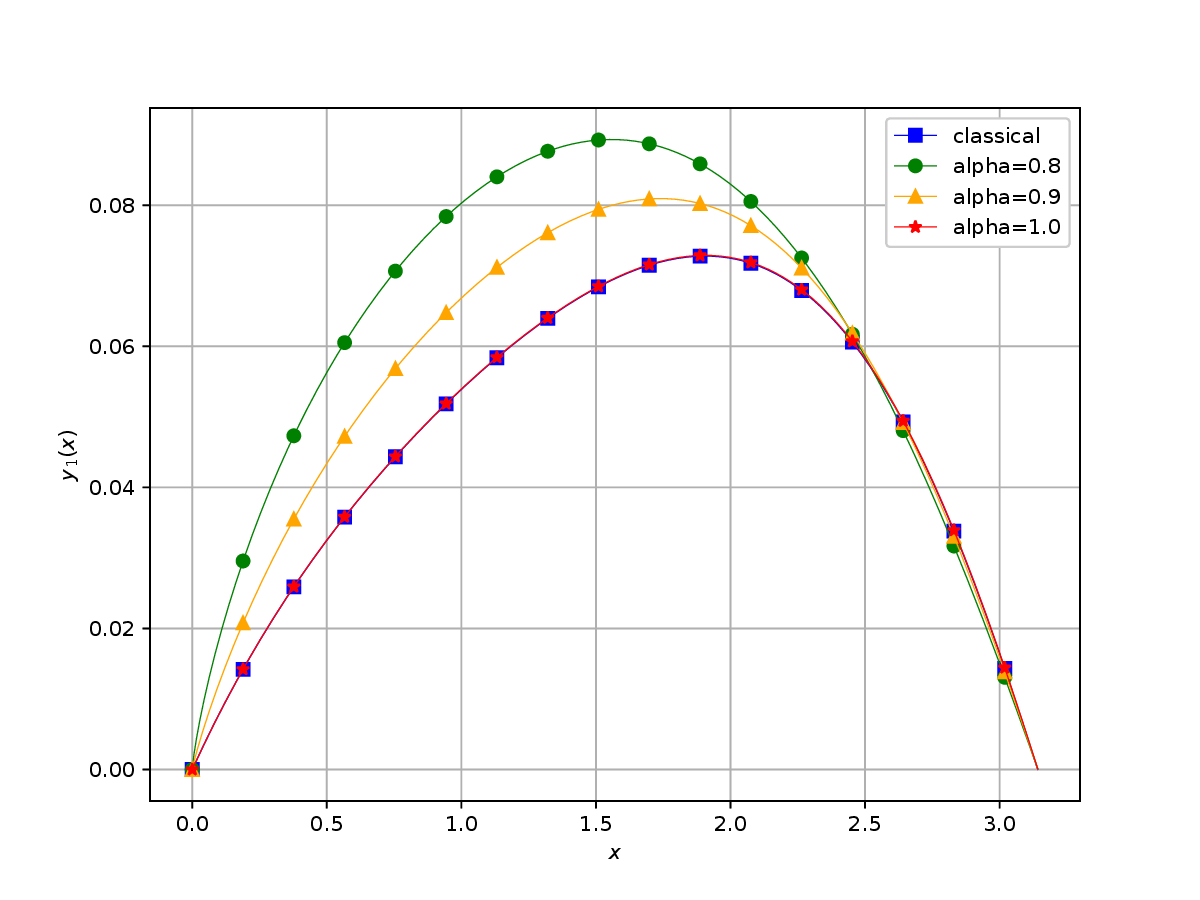}
        \label{fig:fig1_ex3}
    \end{subfigure}
    \hfill
    \begin{subfigure}[b]{0.45\linewidth}
        \centering
        \includegraphics[width=\linewidth]{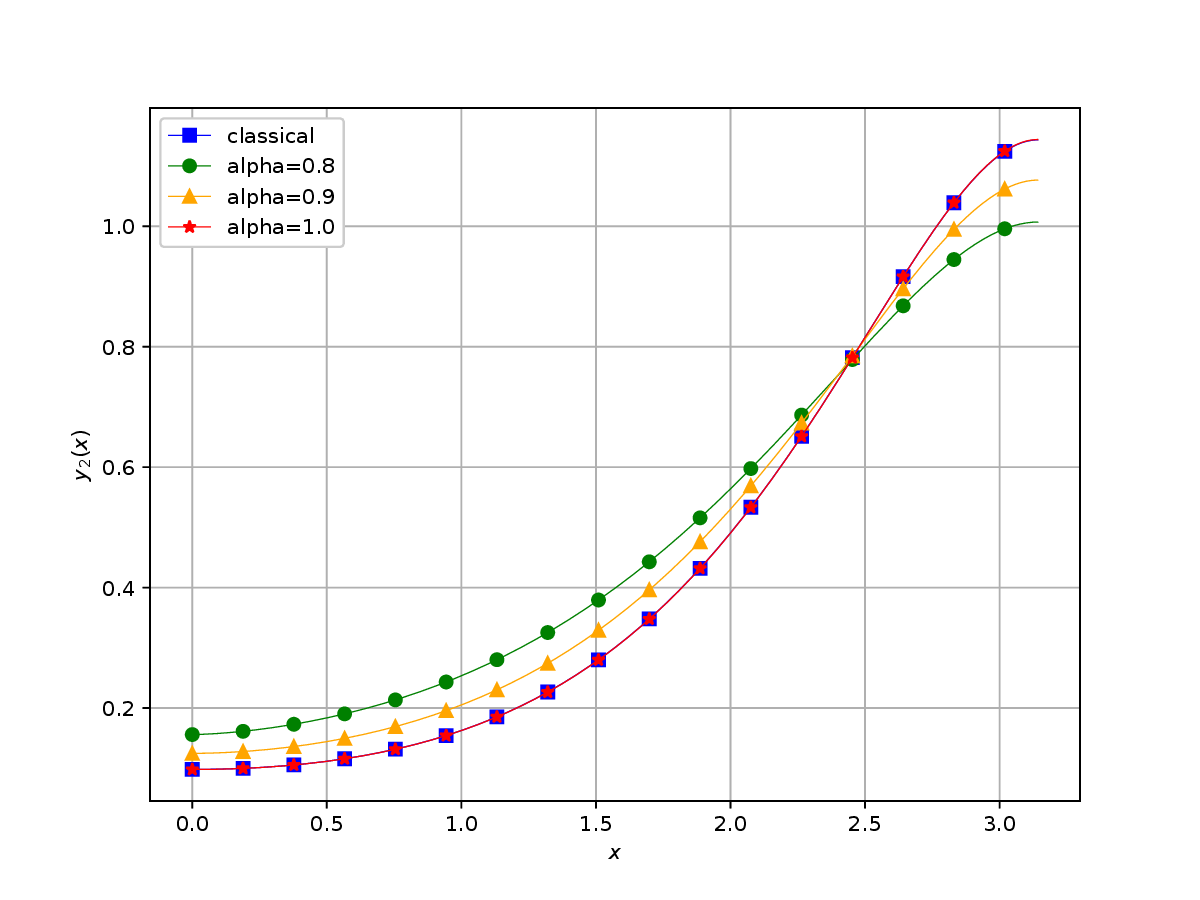}
        \label{fig:fig2_ex3}
    \end{subfigure}
    \label{fig:combined_ex3}
\end{figure}

For easier visual verification that the classical and fractal methods agree when $\alpha = 1$, zoomed-in plots are also provided in Figure ~\ref{fig:combined_ex3_2}.

\begin{figure}[H]
    \centering
    \caption{}
    \begin{subfigure}[b]{0.45\linewidth}
        \centering
        \includegraphics[width=\linewidth]{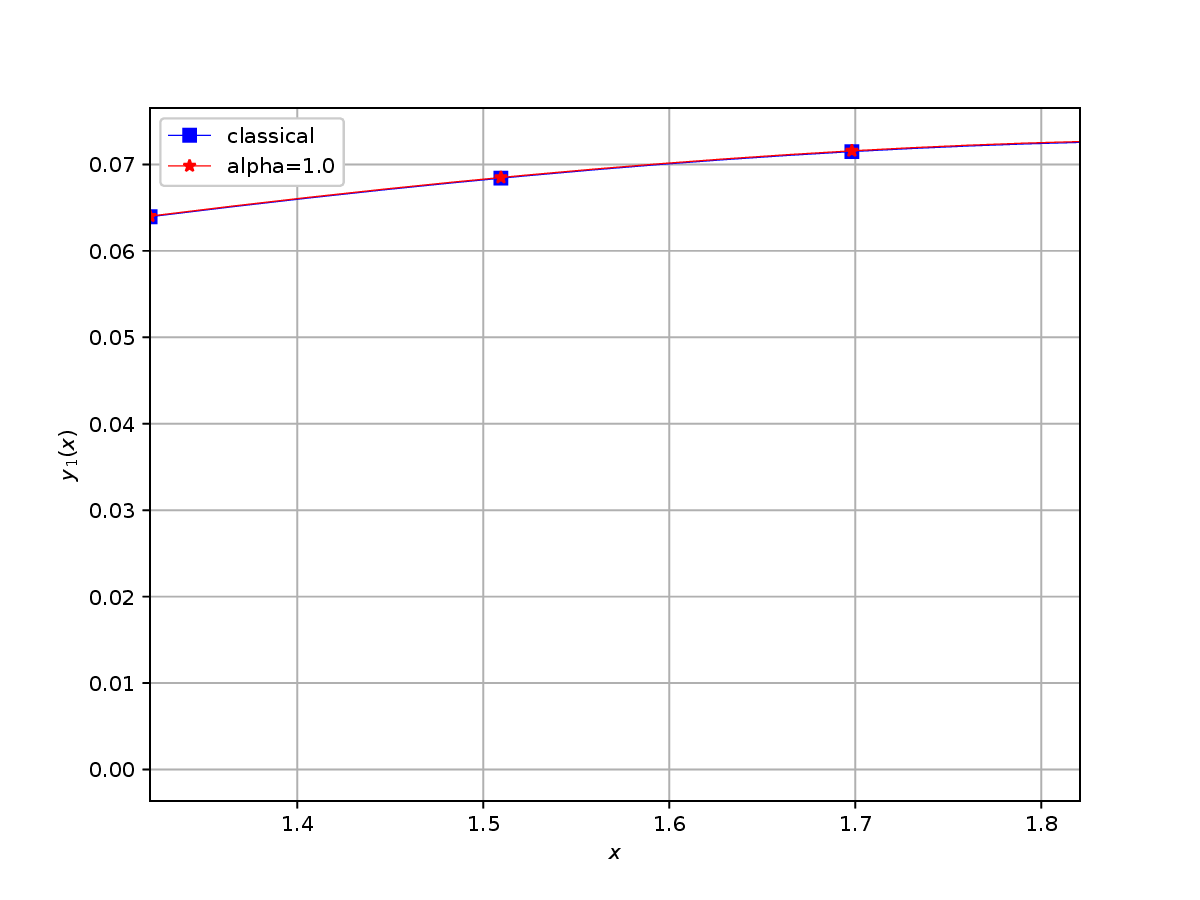}
        \label{fig:fig3_ex3}
    \end{subfigure}
    \hfill
    \begin{subfigure}[b]{0.45\linewidth}
        \centering
        \includegraphics[width=\linewidth]{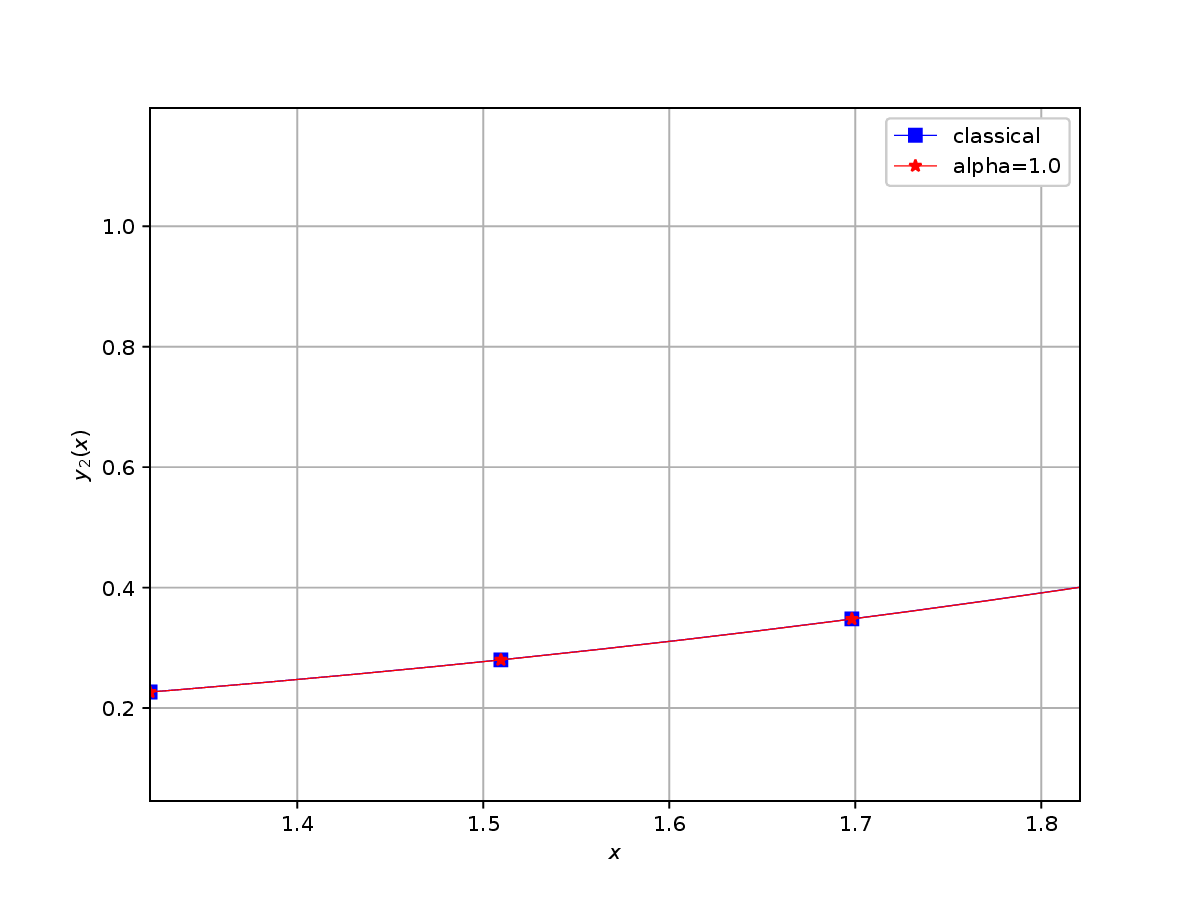}
        \label{fig:fig4_ex3}
    \end{subfigure}
    \label{fig:combined_ex3_2}
\end{figure}   
\end{example}

\section{Conclusion}

In this paper, we have extended the results in \cite{Cetinkaya2021} by introducing and investigating a fractal Dirac problem on a finite interval. We have shown some properties of eigenvalues and eigenfunctions. From the numerical method presented, we see that our theoretical findings are validated. The results are presented for various fractional order derivatives; as the order tends to 1, the eigenvalues recover to an analytical result of the corresponding integer-order problem. The method is simple to implement and can be adapted for other problems.

Some possible future directions include studying the spectral properties of boundary value problems with fractal delay Sturm--Liouville and Dirac equations \cite{Golmankhaneh2024, Golmankhaneh2023delay}, as well as implementing the coarse-grained mass function without approximation in the numerical method to see how the errors improve versus the increased computational expense.

\section*{Declarations}

\subsection*{Funding} The authors declare that there is no funding available for this article.

\subsection*{Conflict of Interest} The authors declare no conflicts of interest.

\subsection*{Ethical Approval} This article does not contain any studies with human participants or animals performed by the authors.

$\,$

$\,$

\bibliographystyle{plain}
\nocite{*}
\bibliography{bibliography}

\end{document}